\documentclass[11 pt]{article}
\setlength{\textheight}{23cm}
\setlength{\textwidth}{16cm}
\setlength{\oddsidemargin}{0cm}
\setlength{\evensidemargin}{0cm}
\setlength{\topmargin}{0cm}
\usepackage{amsmath, amssymb}
\usepackage{amsthm}

\theoremstyle{plain} 
\newtheorem{theorem}{Theorem}[section]

\newtheorem{corollary}[theorem]{\indent\sc Corollary}
\newtheorem{proposition}[theorem]{Proposition}

\theoremstyle{plain} 
\newtheorem{definition}[theorem]{Definition}
\newtheorem{remark}[theorem]{Remark}

\numberwithin{equation}{section}

\makeatother

\title{\uppercase {On The Prolongations of Homogeneous Vector Bundles}}

\author{ H\"{u}lya Kad{\i}o\u{g}lu \\ \small Department of Mathematics Education, Yildiz Technical University\\ \small hkadio@yildiz.edu.tr}
\date{}
\begin{document}
   \maketitle
   
    \footnote{ 
2000 \textit{Mathematics Subject Classification}.
Primary 53C30; Secondary 55R91.
}
\footnote{ 
\textit{Key words and phrases:} Homogeneous Space, Fiber bundles, Prolongation, Homogeneous Vector Bundles
}
   \begin{abstract} In this paper, we introduce a study of prolongations of homogeneous vector bundles. We give an alternative approach for the prolongation. For a given homogeneous vector bundle $E$, we obtain a new homogeneous vector bundle. The homogeneous structure and its corresponding representation are derived. The prolongation of induced representation, which is an infinite dimensional linear representation, is also defined.
   
\end{abstract}
 
    \section{Introduction}

\hspace{5mm} In this study, we continue to work on prolongations. In our previous work \cite{Myarticle},  we have defined prolongations of finite-dimensional real representations of Lie groups and obtained faithful representations on tangent bundles of  Lie groups \cite{Myarticle}. In this work, we use the prolongations of these representations to give an alternative method to prolonge a vector bundle, specially a homogeneous vector bundle, which also has group actions in its structure. We also give the definition of the prolongation of induced representations. In the literature, the well known method for prolongation is to use lifts( for example vertical lifts or complete lifts)\cite{Yano} or to use jet prolongations \cite{Saunders}. For example in \cite{Fisher}, Fisher and Laguer worked on the second order tangent bundles by using jets. For further information about jet manifolds, we refer to \cite{Cordero},\cite{Saunders}.

Homogeneous vector bundles were studied, because of their applications to cohomologies and complex analytic Lie groups. In 1957, Raoul Bott \cite{Bott} dealt with induced representations in the framework of complex analytic Lie groups. In 1964, Griffiths gave differential-geometric
derivations of various properties of homogeneous complex manifolds. He gave some differential geometry applications to homogeneous
vector bundles and to the study of sheaf cohomology \cite{Griffiths}.  In 1988, Purohit \cite{Purohit} showed that there is a one-to-one correspondence between homogeneous vector bundles and linear representations. Various other studies about homogeneous vector bundles can be found in the literature.(\cite{Boralevi},\cite{Harboush})  

Moreover, in 1972, {\it{R. W. Brockett and H. J. Sussmann}} described how the tangent bundle of a homogeneous space can be viewed as a homogeneous space \cite{Brockett}. They associated every Lie group $G$ with another Lie group $G^*=Lie(G)\times G$ constructed as a semi direct product with the group operation given by 
\begin{equation}
(a,g).(a',g')=(a+ad(g)(a'),gg')
\end{equation}

\noindent where $(a,g), (a',g') \in G^*$. They also showed that if $G$ acts on a manifold $X$, then there exists a left action of $G^*$ on $TX$ by 
\begin{equation}
(a,g).v= d\sigma_g (v)+\bar{a}(g.\pi(v))\hspace{2mm} for\hspace{1mm} all\hspace{1mm} v\in TX
\end{equation}  
\noindent Here $\pi$ denotes the natural projection from $TX$ onto $X$ (i.e. $\pi(v)=x$ if and only if $v\in X$) and $\sigma_g:X \to X$ is the map $ x \to gx$. Clearly, both $\sigma_g(v)$ and $\bar{a}(g.\pi(v))$ belong to $T_{g.\pi(v)}$, so the sum is well-defined.
   
This paper is organized as follows. In section \ref{pre} we give some basic definitions and theorems that we need for our proofs. In section \ref{Pro}, we give homogeneous vector bundle structure of the prolonged bundle. And at the end , in section \ref{conc}, we give the conclusion and future work.

\section{Preliminaries}\label{pre}

First of all, we give the definition of a homogeneous vector bundle.

\begin{definition}
Let $G$ be a Lie group, $F$ be a $n$ dimensional real vector space, and $G$ acts transitively on a manifold $M$. Let $H$ be the isotropy subgroup of $G$ at a fixed point $p_0 \in M$ so that $M$ becomes the coset space $G/H$. In addition, suppose $G$ acts on the vector bundle $E$ sitting over $G/H$ so that its action on the base agrees with the usual action of $G$ an cosets. Then such a structure $(E, \pi, M, F)$ is called a homogeneous vector bundle.\cite{Purohit}
\end{definition}

\begin{theorem}\label{sigmaofE}

Homogeneous vector bundles over $G/H$ are in one-to-one correspondence with linear representations of $H$ \cite{Purohit}.
\end{theorem}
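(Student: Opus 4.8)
The plan is to exhibit two constructions and prove they are mutually inverse, so that the asserted bijection is between isomorphism classes of homogeneous vector bundles over $G/H$ and equivalence classes of linear representations of $H$. First I would pass from a homogeneous vector bundle $(E,\pi,G/H,F)$ to a representation of $H$. Restrict attention to the fiber $E_{p_0}$ over the base point $p_0=eH$. Since every $h\in H$ fixes $p_0$, and $G$ acts on $E$ by vector bundle morphisms covering its action on the base, each $h$ restricts to a linear isomorphism $v\mapsto h\cdot v$ of $E_{p_0}$. This yields a group homomorphism $\rho_E\colon H\to GL(E_{p_0})\cong GL(F)$, smooth because the $G$-action on $E$ is; this is the representation associated to $E$.

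Conversely, given a linear representation $\rho\colon H\to GL(F)$, I would form the associated bundle $E_\rho=G\times_H F:=(G\times F)/\!\sim$, where $(g,v)\sim(gh,\rho(h)^{-1}v)$ for all $h\in H$, with projection $[g,v]\mapsto gH$ and $G$-action $g'\cdot[g,v]=[g'g,v]$. The main technical obstacle is to check that $E_\rho$ is a genuine (smooth) vector bundle over $G/H$: this rests on the fact that $G\to G/H$ is a principal $H$-bundle, which is standard since $H$ is a closed subgroup of the Lie group $G$. Local sections $s\colon U\to G$ of this principal bundle then give local trivializations $U\times F\to\pi^{-1}(U)$, $(gH,v)\mapsto[s(gH),v]$, and one verifies that the resulting transition functions are smooth and $GL(F)$-valued, that the equivalence relation is a smooth free action so the quotient is a manifold, and that the $G$-action above is well-defined, smooth, linear on fibers, and covers the standard action on cosets. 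Hence $E_\rho$ is a homogeneous vector bundle.

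Finally I would check that the two constructions are inverse to each other. Starting from $\rho$, the fiber of $E_\rho$ over $p_0$ is $\{[e,v]:v\in F\}$, identified with $F$ via $[e,v]\leftrightarrow v$; for $h\in H$ we get $h\cdot[e,v]=[h,v]=[e,\rho(h)v]$, so the representation associated to $E_\rho$ is $\rho$ again (up to this identification). Starting from a homogeneous vector bundle $E$ with associated representation $\rho_E$ on $E_{p_0}$, define $\Phi\colon E_{\rho_E}\to E$ by $\Phi([g,w])=g\cdot w$ for $w\in E_{p_0}$. This is well-defined since $\Phi([gh,\rho_E(h)^{-1}w])=gh\cdot(\rho_E(h)^{-1}w)=g\cdot w$; it is $G$-equivariant, fiberwise linear, and bijective because $G$ acts transitively on the base while each fiber $E_{gH}$ is exactly $g\cdot E_{p_0}$. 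A routine check that $\Phi$ and $\Phi^{-1}$ are smooth shows $\Phi$ is an isomorphism of homogeneous vector bundles, completing the correspondence. The step I expect to absorb most of the work is establishing local triviality and the smooth structure of $E_\rho$ from the principal bundle $G\to G/H$; the equivariance and mutual-inverse verifications are then essentially formal.
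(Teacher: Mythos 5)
Your proposal is correct and follows exactly the correspondence the paper itself sketches (the paper only states the two constructions --- the fiber representation $\sigma(h)(q)=hq$ on $F=\pi^{-1}(p_0)$ and the associated bundle $E=G\times_H F$ --- deferring the proof to the cited references). You fill in the standard verifications (local triviality of $G\times_H F$ via the principal bundle $G\to G/H$, and the mutual-inverse checks up to isomorphism of bundles and equivalence of representations), which is precisely the intended argument.
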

Above mentioned representation is defined as follows:

Let $(E, \pi, M, F)$ be a homogeneous vector bundle where $M=G/H$, $H$ be the isotropy subgroup of $G$ at $p_0$, and $F=\pi^{-1}(p_0)$. Then, there exists a Lie group representation \\$\sigma:H \to Aut(F)$ with
\begin{equation}
\begin{gathered}
\sigma(h):F \to F\\
\hspace{3cm} q \to \sigma(h)(q)=hq
\end{gathered}
\end{equation}
where $h \in H$. 

Conversely, if $G$ is a Lie group with isotropy subgroup $H$, $F$ is a finite dimensional real vector space and $\sigma:H \to Aut(F)$ is a representation, then there exists a homogeneous vector bundle $(E,\pi,G/H,F)$ where $E=G\times_H F$ \cite{Adams, Kobayashi}.

Homogeneous vector bundles also corresponds infinite dimensional real representations. This correspondence is illustrated in the following proposition.    

\begin{proposition}\cite{Purohit}
Let $(E, \pi,M, F)$ be a homogeneous vector bundle, where $M=G/H$, and $\Gamma(E)$ denotes (global) cross sections of the vector bundle $E$. For all $g \in G$, $\rho(g)$ can be defined by the following:
\begin{equation}
\begin{gathered}
\rho(g):\Gamma(E) \to \Gamma(E)\hspace{12cm}\\
\hspace{3cm} \psi \to \rho(g)(\psi):M \to E \hspace{12cm}\\
 p \to (\rho(g)(\psi))(p)=g.\psi(g^{-1}p)\hspace{25mm}
\end{gathered}
\end{equation}
Clearly, $\rho$ is a representation of $G$ in $\Gamma(E)$ which is induced by the representation $\sigma$ that is defined in theorem \ref{sigmaofE}. We call the representation $\rho$ as the induced representation of $E$.   
\end{proposition}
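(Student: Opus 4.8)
The plan is to check, one at a time, the three properties that make $\rho$ a representation — that each $\rho(g)$ sends $\Gamma(E)$ into itself, that each $\rho(g)$ is linear, and that $g\mapsto\rho(g)$ is a group homomorphism — and then to explain why $\rho$ deserves to be called the representation induced by $\sigma$ by passing to the model $E=G\times_H F$ of Theorem \ref{sigmaofE}.

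First I would verify that $\rho(g)(\psi)$ is genuinely a cross-section. Fix $g\in G$ and $\psi\in\Gamma(E)$. For $p\in M$ we have $g^{-1}p\in M$ and $\psi(g^{-1}p)\in\pi^{-1}(g^{-1}p)$; since by hypothesis the $G$-action on $E$ covers the usual action on the base, $g\cdot\psi(g^{-1}p)\in\pi^{-1}(g\cdot g^{-1}p)=\pi^{-1}(p)$, so $\pi\big((\rho(g)(\psi))(p)\big)=p$. Smoothness of $\rho(g)(\psi)$ is automatic, being a composite of the smooth maps $p\mapsto g^{-1}p$, then $\psi$, then the action of $g$ on $E$; hence $\rho(g)(\psi)\in\Gamma(E)$. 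Linearity of $\rho(g)$ is inherited from the fact that $G$ acts on $E$ by vector-bundle automorphisms, so $v\mapsto g\cdot v$ is a linear isomorphism $\pi^{-1}(q)\to\pi^{-1}(gq)$; therefore
\[
(\rho(g)(\lambda\psi+\mu\phi))(p)=g\cdot(\lambda\psi+\mu\phi)(g^{-1}p)=\lambda\,g\cdot\psi(g^{-1}p)+\mu\,g\cdot\phi(g^{-1}p)=\lambda(\rho(g)\psi)(p)+\mu(\rho(g)\phi)(p).
\]
For the homomorphism property I would use that the $G$-action on $E$ is a \emph{left} action: for $g_1,g_2\in G$, $\psi\in\Gamma(E)$, $p\in M$,
\[
(\rho(g_1)(\rho(g_2)\psi))(p)=g_1\cdot\big((\rho(g_2)\psi)(g_1^{-1}p)\big)=g_1\cdot\big(g_2\cdot\psi(g_2^{-1}g_1^{-1}p)\big)=(g_1g_2)\cdot\psi\big((g_1g_2)^{-1}p\big)=(\rho(g_1g_2)\psi)(p),
\]
while $\rho(e)\psi=\psi$ because $e$ acts as the identity on $E$. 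Combined with linearity this shows $\rho$ is a representation of $G$ in $\Gamma(E)$.

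Finally, to justify the word \emph{induced}, I would invoke the correspondence $E\cong G\times_H F$: under the standard identification $\Gamma(E)$ is isomorphic to the space of smooth maps $f\colon G\to F$ with $f(gh)=\sigma(h)^{-1}f(g)$, a section $\psi$ corresponding to $f_\psi$ determined by $\psi(gH)=[g,f_\psi(g)]$. Tracing the formula in the statement through this identification, $\rho(g_0)$ becomes the left translation $(\rho(g_0)f)(g')=f(g_0^{-1}g')$, which is precisely the classical representation of $G$ induced from $\sigma$; moreover restricting the equivariance relation to $H$ recovers $\sigma$ itself. I expect this last identification — keeping the equivariance convention and the direction of translation consistent with the displayed formula for $\rho$ — to be the only genuinely delicate point; everything preceding it is a direct unwinding of definitions using that $G$ acts on $E$ on the left by bundle automorphisms.
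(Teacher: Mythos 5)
Your proof is correct. The paper itself supplies no argument for this proposition: it is quoted from Purohit and the representation property is dismissed with ``Clearly,'' so there is nothing in the paper to compare against, and your verification --- $\rho(g)\psi$ is again a section because the action on $E$ covers the action on the base, linearity of $\rho(g)$ from the fiberwise-linear action, the homomorphism property from the left-action axioms, and the identification of $\Gamma(G\times_H F)$ with $\sigma$-equivariant maps $f:G\to F$ under which $\rho$ becomes left translation, recovering the classical induced representation --- is exactly the standard one. The only point worth flagging is that your linearity step assumes $G$ acts on $E$ by vector-bundle automorphisms (linearly on fibers); the paper's definition of a homogeneous vector bundle does not state this explicitly, but it is implicit in the correspondence of Theorem \ref{sigmaofE} with representations $\sigma:H\to Aut(F)$, so the assumption is legitimate and worth stating, as you did.
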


\begin{theorem} If a Lie group $G$ acts transitively and with maximal rank on a
differentiable manifold $X$, then $G^*$ acts transitively and with maximal rank
on the tangent bundle of $X$ \cite{Brockett}.
\end{theorem}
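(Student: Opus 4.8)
The plan is to verify the two required properties—transitivity and maximal rank—for the $G^*$-action on $TX$ given by formula (1.3), using the corresponding properties of the $G$-action on $X$ together with the explicit structure of $G^* = \mathrm{Lie}(G) \times G$. First I would unwind the definitions: a point of $TX$ is a tangent vector $v \in T_xX$ with $\pi(v) = x$, and for $(a,g) \in G^*$ the action sends $v$ to $d\sigma_g(v) + \bar a(g\cdot x) \in T_{gx}X$, where $\bar a$ is the fundamental vector field on $X$ generated by $a \in \mathrm{Lie}(G)$. I would first check this is genuinely a left action by a short computation using the semidirect product law (1.1) and the identity $d\sigma_g \circ \overline{a'} = \overline{\mathrm{ad}(g)a'} \circ \sigma_g$ relating pushforwards of fundamental vector fields to the adjoint action; this is routine but worth recording since the rest rests on it.

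For transitivity, fix two vectors $v \in T_xX$ and $w \in T_yX$. Since $G$ acts transitively on $X$, choose $g \in G$ with $gx = y$; then $d\sigma_g(v) \in T_yX$, so it suffices to move $d\sigma_g(v)$ to $w$ by an element of the form $(a, e)$. Such an element acts by $u \mapsto u + \bar a(y)$ for $u \in T_yX$, so I need $a \in \mathrm{Lie}(G)$ with $\bar a(y) = w - d\sigma_g(v)$. The existence of such an $a$ is exactly the statement that the evaluation map $\mathrm{Lie}(G) \to T_yX$, $a \mapsto \bar a(y)$, is surjective—which is precisely the ``maximal rank'' (infinitesimal transitivity) hypothesis on the $G$-action at the point $y$. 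Thus transitivity of $G^*$ on $TX$ follows directly from transitivity plus maximal rank of $G$ on $X$.

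For maximal rank of the $G^*$-action on $TX$, I would compute the infinitesimal action: differentiate the $G^*$-action at the identity in the directions of $\mathrm{Lie}(G^*) = \mathrm{Lie}(G) \oplus \mathrm{Lie}(G)$. At a point $v \in T_xX$, the second factor (curves $(0, g_t)$) contributes the canonical lift of fundamental vector fields of $X$ to $TX$, whose projection to $T_xX$ is $\bar b(x)$ for $b$ in that copy of $\mathrm{Lie}(G)$; by maximal rank of $G$ these already surject onto $T_xX = T_\pi(v)X$, covering the ``horizontal'' directions in $T_v(TX)$. The first factor (curves $(ta, e)$) contributes, at $v$, the vertical vector corresponding to $\bar a(x) \in T_xX$ under the canonical identification of the fiber $T_x X$ with the vertical subspace $V_v(TX) \subset T_v(TX)$; again by maximal rank these vertical vectors exhaust $V_v(TX)$. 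Since $T_v(TX)$ is spanned by (the image of) a horizontal complement together with the vertical subspace, the infinitesimal action is surjective onto $T_v(TX)$, i.e. $G^*$ acts with maximal rank.

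The main obstacle I anticipate is bookkeeping rather than conceptual: making the identification between the vertical tangent space $V_v(TX)$ and the fiber $T_{\pi(v)}X$ precise and checking that the derivative of $t \mapsto (ta,e)\cdot v = v + t\,\bar a(\pi(v))$ really lands in $V_v(TX)$ as the vector $\bar a(\pi(v))$ under that identification, and separately handling that ``horizontal'' directions only make sense after choosing a local trivialization or splitting—so one should phrase the surjectivity argument so it is independent of that choice (e.g.\ by noting the projection $d\pi: T_v(TX) \to T_{\pi(v)}X$ is surjective with kernel $V_v(TX)$, and showing the infinitesimal action surjects onto the kernel and that its composition with $d\pi$ surjects onto $T_{\pi(v)}X$). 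Once that linear-algebra lemma about tangent bundles is cleanly stated, both the transitivity and the maximal-rank claims reduce to the two hypotheses on $G \curvearrowright X$ with essentially no further work.
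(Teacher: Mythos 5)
The paper does not actually prove this theorem---it is quoted verbatim from Brockett--Sussmann \cite{Brockett}---so there is no internal proof to compare against; your argument is correct and is essentially the original one. Both halves work as you describe: transitivity follows by first applying $(0,g)$ to move $v\in T_xX$ into $T_yX$ and then $(a,e)$, which acts as translation by $\bar a(y)$, with $a$ chosen via the maximal-rank (surjective evaluation) hypothesis to hit $w-d\sigma_g(v)$; and maximal rank of the $G^*$-action follows from your observation that the image of the infinitesimal action at $v$ contains the vertical subspace $V_v(TX)\cong T_{\pi(v)}X$ (from the $(ta,e)$ curves) and surjects onto $T_{\pi(v)}X$ under $d\pi$ (from the $(0,\exp(tb))$ curves), hence is all of $T_v(TX)$.
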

We have following remarks for above theorem:
\begin{remark}

 \begin{enumerate}
 \item Clearly, above result implies that the tangent bundle of a coset space
$G/H$ is again a coset space and moreover, is of the form $G^*/K$ for some
closed subgroup $K$ of $G^*$. 
\item If $H$ is a closed subgroup of $G$, then $H^*$ can be identified, in an
obvious way, with a closed subgroup of $G^*$. One verifies easily that the
isotropy group of $0_x$ corresponding to the action of $G^*$ on $TX$ is
precisely $H_x^*$, where $H_x$ is the isotropy group of $x$ corresponding to the
action of $G$ on $X$ In particular, we have the diffeomorphism $T(G/H)\simeq G^*/H^*$.
\end{enumerate}
\end{remark}

\begin{definition}
Let $\Phi$ be a $n$-dimensional Lie group representation on $G$. "The prolongation of the representation $\Phi$", which is denoted by $\widetilde{\Phi}$ is given by the following equation:
\begin{eqnarray}
\widetilde{\Phi}:TG \to GL(2n)\hspace{54mm} \nonumber \\
                 (a,v) \to \widetilde{\Phi}(a,v)=\
                 \begin{pmatrix}
                 \Phi(a) & 0\\
                 [(d(\Phi)_e(v))_j^i].\Phi(a) & \Phi(a)
                 \end{pmatrix}.
\end{eqnarray}
\end{definition}

\section {\bf{Prolongation of a Homogeneous Vector Bundle}} \label{Pro}

In this section, we'll introduce a homogeneous vector bundle with prolonged Lie group representation, which is defined in \cite{Myarticle}. 
\newline

\noindent{\bf{Main Result:}} \\

Let $(E,\pi, G|H, F)$ be the homogeneous vector bundle with the corresponding Lie group representation $\sigma: H\to Aut(F)$. Using the prolongation of the representation $\sigma$ defined in \cite{Myarticle}, we have 
\begin{equation}
\tilde{\sigma}(dR_h(a))=\
\begin{pmatrix}
\sigma(h) & 0\\
d(\sigma)_e(a).\sigma(h) & \sigma(h)
\end{pmatrix}
\end{equation}

where $\tilde{\sigma}: TH \to Aut(TF)$ is the prolongation of representation $\sigma$. 
Getting composition of $\Theta$ and $\tilde{\sigma}$, we define $\sigma^*:H^* \to Aut(TF)$ as follows:

\begin{equation}
\sigma^*(a,h)=\
\begin{pmatrix}
\sigma(h) & 0\\
d(\sigma)_e(a).\sigma(h) & \sigma(h)
\end{pmatrix}
\end{equation}

where $\Theta$ denotes the natural diffeomorphism $\Theta: H^* \to TH$. 
In the following configuration, we give the summary of what implications we will have next.
\begin{equation}
E \longleftrightarrow \sigma\longrightarrow \sigma^* \longleftrightarrow E^*
\end{equation}
where $E=(E, \pi, G/H, F)$,  $\sigma:H \to Aut(F)$, $\sigma^*:H^* \to Aut(TF)$ and $ E^*=(E^*, \pi^*, G^*/H^*, TF)$.
Now we define the new induced action which will be used for defining equivalence classes.
\newline
 
\subsection{\bf{The Prolonged Action of $H^*$ on $G^*\times TF$:}}

Using the above representation $\sigma^*$, and the natural action of $G^*$ to its coset space $G^*/H^*$, we have the following induced action $\tilde{\alpha}:(G^* \times TF)\times H^* \to G^* \times TF$ which can be obtained by the coset space definition:

\begin{equation}
 \tilde{\alpha}(a,g,v,b,h)=((a,g).(b,h),\sigma^*((b,h)^{-1})(v)).\label{complex}
\end{equation}
  
\begin{proposition}
If $v=(\xi ,u)\in TF$ and $\sigma^*((b,h)^{-1})(v)=(\tilde{\xi},\tilde{u})$, then
\begin{eqnarray}
 \tilde{\alpha}(a,g,v,b,h)=(a+adj(g)(b), gh,\tilde{\xi},\tilde{u}).
\end{eqnarray}

\end{proposition}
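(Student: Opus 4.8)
The plan is to unwind the definition of the induced action $\tilde\alpha$ given in~\eqref{complex} and to read off the four coordinates of the output one at a time. By definition,
\[
\tilde\alpha(a,g,v,b,h)=\bigl((a,g).(b,h),\ \sigma^*((b,h)^{-1})(v)\bigr),
\]
so the proposition reduces to two independent identifications: the $G^*$-component $(a,g).(b,h)$, and the $TF$-component $\sigma^*((b,h)^{-1})(v)$. I would treat these separately and then concatenate.

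For the $G^*$-component I would invoke the semidirect-product group law on $G^*=\mathrm{Lie}(G)\times G$ recalled in the introduction, namely $(a,g).(a',g')=(a+ad(g)(a'),gg')$. Since $H^*$ sits in $G^*$ as a closed subgroup carrying the restricted operation, applying this rule to $(a,g)\in G^*$ and $(b,h)\in H^*$ gives $(a,g).(b,h)=(a+adj(g)(b),gh)$, which supplies the first two entries of the claimed formula. For the $TF$-component there is essentially nothing to do: the hypothesis states precisely that $v=(\xi,u)$ and $\sigma^*((b,h)^{-1})(v)=(\tilde\xi,\tilde u)$, so the last two entries are $\tilde\xi$ and $\tilde u$ by fiat. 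Splicing the two computations together yields $\tilde\alpha(a,g,v,b,h)=(a+adj(g)(b),gh,\tilde\xi,\tilde u)$, as asserted.

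The only point that needs care — and hence the hardest part, though it is a mild one — is the bookkeeping: one must be sure that the multiplication used in the first slot is genuinely the restriction to $H^*$ of the $G^*$-operation (this is what makes the quotient $G^*\times_{H^*}TF$ meaningful), and that $\sigma^*$ is a genuine homomorphism on $H^*$, so that the symbol $\sigma^*((b,h)^{-1})$ is unambiguous; both facts were arranged in the preceding paragraphs, where $\sigma^*$ was produced as the composite of the natural diffeomorphism $\Theta\colon H^*\to TH$ with the prolonged representation $\tilde\sigma$. If desired, one can continue the computation to make the fiber part fully explicit: using $(b,h)^{-1}=(-adj(h^{-1})(b),h^{-1})$ and applying the matrix form of $\sigma^*$ to the column $(\xi,u)$ one obtains $\tilde\xi=\sigma(h^{-1})(\xi)$ and $\tilde u=d(\sigma)_e\!\bigl(-adj(h^{-1})(b)\bigr)\sigma(h^{-1})(\xi)+\sigma(h^{-1})(u)$, but this refinement is not needed for the statement in the form given.
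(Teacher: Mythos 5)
Your proposal is correct and follows essentially the same route as the paper: unwind the definition of $\tilde{\alpha}$, compute the $G^*$-component from the semidirect-product law $(a,g).(b,h)=(a+adj(g)(b),gh)$, and identify the $TF$-component with $\sigma^*((b,h)^{-1})(v)=(\tilde{\xi},\tilde{u})$. The only difference is cosmetic: the paper carries out the explicit matrix computation of $\sigma^*((b,h)^{-1})(\xi,u)$ in coordinates (which it then reuses in the subsequent corollary), whereas you correctly observe that for the proposition itself this component is given by hypothesis, and your optional explicit formula (with the factor $\sigma(h^{-1})$ acting on $\xi$ in the lower-left term) is in fact the one consistent with the stated definition of $\sigma^*$.
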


\begin{proof}
 Since $(a,g).(b,h)=(a+adj(g)(b),gh)$ and $(b,h)^{-1}=(adj(h^{-1})(-b),h^{-1})$, we have 
\begin{equation}
\sigma^*((b,h)^{-1})(v)=\sigma^*(adj(h^{-1})(-b),h^{-1})(\xi,u)\nonumber
\end{equation}
Using the definition of $\sigma^*$ we have
\begin{equation}
                       =\
                       \begin{pmatrix}
                       \sigma(h^{-1}) & 0 \\
                       d(\sigma)_e(-adj(h^{-1})(b)) & \sigma(h^{-1}) 
                       \end{pmatrix}.\
                       \begin{pmatrix}
                       \xi \\
                       u
                       \end{pmatrix}.\nonumber 
\end{equation}   

\begin{equation}                            
                       =(\sigma(h^{-1})(\xi), [(d\sigma)_e(-adj(h^{-1})(b))]_j^i \xi_ix_j +[\sigma(h^{-1})]_j^i u_i \dot{x}_j)\label{action}
\end{equation}
\begin{equation}
                       =(\tilde{\xi},\tilde{u})\label{sigma}
\end{equation}   
where $v=(\xi,u) \in TF$ for all $(a,g) \in G^*$ and $(b,h) \in H^*$.
Therefore, using equation (\ref{sigma}), we finish the proof.
\end{proof}

 Using above action, it is possible to form an equivalence relation on $G^* \times TF$.
\newline

\subsection{\bf{The Prolonged Equivalence Relation:}} 

If we use above induced action, we have the following equivalence relation on $G^* \times TF$ by \cite{Adams, Kobayashi}:
\newline

$((a,g),(\xi, u)) \simeq((a',g'),(\xi', u'))$ if and only if there exists $(b,h) \in H^*$ such that the following equation holds
\begin{equation}
((a',g'),(\xi', u'))=((a,g),(\xi, u)).(b,h).\label{equiv}
\end{equation}

The next corollary gives the simplified form of equivalence relation that we have defined above.
\begin{corollary}
The equivalence relation defined by the equation (\ref{equiv}) can be given by the following.
\begin{equation}
((a,g),(\xi, u))\simeq ((a',g'),(\xi', u')) \Leftrightarrow  \left\{ \begin{array}{rcl} a'=a+adj(g)(b),\hspace{6cm}\\ g'=gh,\hspace{78mm} \\ \xi'=\sigma(h^{-1})(\xi),\hspace{66mm} \\ u'=[(d\sigma)_e(-adj(h^{-1})(b))]_j^i \xi_i x_j+[\sigma(h^{-1})]_j^i u_i \dot{x}_j.\hspace{12mm}\end{array}\right. \label{equi}
\end{equation}
\end{corollary}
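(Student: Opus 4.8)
The plan is to unwind the right translation $\cdot\,(b,h)$ that appears in (\ref{equiv}) in terms of the prolonged action $\tilde{\alpha}$ of (\ref{complex}), and then to read off the four coordinates using the preceding proposition, whose proof already contains the explicit formulas we need. Thus there is essentially no new content to prove; the corollary is the proposition rewritten componentwise, with the equivalence relation made fully explicit.

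First I would record that, by the very definition of the relation in (\ref{equiv}), the expression $((a,g),(\xi,u)).(b,h)$ denotes $\tilde{\alpha}(a,g,\xi,u,b,h)$, so that $((a,g),(\xi,u))\simeq((a',g'),(\xi',u'))$ holds precisely when there exists $(b,h)\in H^*$ with
\[
((a',g'),(\xi',u'))=\tilde{\alpha}(a,g,\xi,u,b,h).
\]
Applying the preceding proposition with $v=(\xi,u)$ rewrites the right-hand side as $(a+adj(g)(b),\,gh,\,\tilde{\xi},\,\tilde{u})$, where $(\tilde{\xi},\tilde{u})=\sigma^*((b,h)^{-1})(v)$. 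Comparing the two sides coordinate by coordinate gives immediately $a'=a+adj(g)(b)$, $g'=gh$, and $(\xi',u')=(\tilde{\xi},\tilde{u})$, i.e. the first two lines of (\ref{equi}) together with an identification of the fibre coordinates.

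Next I would extract the last two equations from the computation carried out inside the proof of that proposition. There, using $(b,h)^{-1}=(adj(h^{-1})(-b),h^{-1})$ and the block form of $\sigma^*$, the vector $\sigma^*((b,h)^{-1})(\xi,u)$ is computed in (\ref{action})--(\ref{sigma}) to be
\[
\Big(\sigma(h^{-1})(\xi),\ [(d\sigma)_e(-adj(h^{-1})(b))]_j^i\,\xi_i x_j+[\sigma(h^{-1})]_j^i\,u_i\dot{x}_j\Big),
\]
and identifying this with $(\xi',u')$ yields exactly the third and fourth lines of (\ref{equi}). For the converse implication I would note that every step above is reversible: if the four displayed equations hold for some $(b,h)\in H^*$, then reversing the chain of equalities shows $((a',g'),(\xi',u'))=\tilde{\alpha}(a,g,\xi,u,b,h)$, which is precisely (\ref{equiv}) with the same witness $(b,h)$. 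Hence the two descriptions of $\simeq$ coincide.

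The argument is essentially bookkeeping, and the only points that deserve a line of care — none of them a genuine obstacle — are the group-theoretic conventions: that $(adj(h^{-1})(-b),h^{-1})$ really is the inverse of $(b,h)$ for the semidirect-product law of $G^*$, that $H^*$ is closed under that law so the witness $(b,h)$ does lie in $H^*$, and that the right-action convention implicit in (\ref{equiv}) is the one that matches $\tilde{\alpha}$. Once these are fixed, the corollary follows directly from the preceding proposition.
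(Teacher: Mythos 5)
Your proposal is correct and follows essentially the same route as the paper: identify the right translation in (\ref{equiv}) with the prolonged action $\tilde{\alpha}$, read the base coordinates off the semidirect-product multiplication, and take the fibre coordinates from the explicit computation (\ref{action}) in the preceding proposition. The only difference is that you spell out the (trivially reversible) converse direction, which the paper leaves implicit.
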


\begin{proof}
If $((a,g),(\xi, u))\simeq ((a',g'),(\xi', u'))$, then there exist $(b,h) \in H^*$ such that 
\begin{eqnarray}
((a',g'),(\xi', u'))&=&((a,g),(\xi, u)).(b,h).\nonumber\\
                    &=&\sigma^*(a,g,(\xi,u),b,h)
\end{eqnarray}
Using equation (\ref{action}), we have $\xi'=\sigma(h^{-1})(\xi)$ and $u'=[(d\sigma)_e(-adj(h^{-1})(b))]_j^i \xi_ix_j +[\sigma(h^{-1})]_j^i u_i \dot{x}_j. $

Moreover, since $\sigma^*(a,g,(\xi,u),b,h)\in T_{(a,g).(b,h)}$, then we have

\begin{eqnarray}
(a',g')&=&(a,g).(b,h)\nonumber\\
       &=&(a+adj(g)(b),gh)
\end{eqnarray} 
which finishes the proof.
\end{proof}

\begin{definition}

We denote $E^*$ to be set of equivalence classes follows from (\ref{equi}), i.e. $$E^*=G^*\times_{H^*}TF$$ and we refer $E^*$ as the {\it{Prolonged Vector Bundle}}.
\end{definition}
\begin{remark} 
The bundle projection of the prolonged bundle is 
\begin{eqnarray}
\pi^*_{E^*}: G^* \times_{H^*} TF \to G^*/H^* \hspace{5cm}\nonumber \\
((a,g),v)H^* \to \pi^*_{E^*}(((a,g),v)H^*)=(a,g)H^*\hspace{15mm} \nonumber
\end{eqnarray}
and local trivialization of the prolonged bundle is 
\begin{eqnarray}
\psi^* : (\pi^*_{E^*})^{-1}(V) \to V \times TF\hspace{55mm} \nonumber \\
((a,g),v)H^* \to \psi^*(((a,g),v)H^* )=((a,gH),\sigma^*(b,h)(v))
\end{eqnarray}
where  $V \subset T(G/H)$ is an open subset.
\end{remark}

So far, we have given the structures of the prolonged bundle. In the following, we define a new prolongation that is obtained by the prolongation of homogeneous vector bundles.
\newline

\begin{definition}
Let $\rho:G \to Aut(\Gamma(E))$ be the induced representation of the homogeneous vector bundle $E$. Then $\rho^* :TG \to Aut(\Gamma(E^*))$ is called the prolongation of the induced representation $\rho$, where $E^*$ denotes the prolongation of $E$.  
\end{definition}

\section{Conclusion and Future Work}\label{conc}
In this paper, we have introduced a study of prolongations of homogeneous vector bundles. We have used one-to-one correspondence of homogeneous vector bundles and finite dimensional Lie group representations, and we have defined prolongations of homogeneous vector bundles. We introduced the geometric structures of this new bundle, such as local trivialization of the bundle, equivalence classes and the bundle projection. We have defined the prolongations of induced representations which are infinite dimensional linear representation. In future, we plan to study on prolongations of infinite dimensional representations by using one-to-one correspondence of homogeneous vector bundles.

\bigskip

\end{document}